\numberwithin{equation}{section}
\definecolor{light}{gray}{.75}
\newcommand{\zero}{\mathbf{0}}
\newcommand{\R}{\mathds{R}}
\newcommand{\N}{\mathds{N}}
\newcommand{\co}{\mathrm{co}}
\newcommand{\PP}{\mathscr{P}} 
\newcommand{\NN}{\mathscr{N}} 
\newcommand{\HH}{\mathscr{H}} 
\newcommand{\x}{\mathbf{x}} 
\newcommand{\y}{\mathbf{y}} 
\newcommand{\M}{\mathbf{M}} 
\newtheorem{theorem}{Theorem}[section]
\newtheorem{lemma}{Lemma}[section]
\theoremstyle{definition}
\newtheorem{remark}{Remark}[section]
\newtheorem{definition}{Definition}[section]
\newtheorem{example}{Example}[section]
\def\clap#1{\hbox to 0pt{\hss#1\hss}}
  \title{Chebyshev approximation for multivariate functions}
\author{{\bf Nadezda Sukhorukova}, Swinburne University of Technology, Australia,\\ {nsukhorukova@swin.edu.au},\\
            {\bf Julien Ugon}, Centre for Informatics and Applied Optimization, \\ Federation University Australia,  j.ugon@federation.edu.au,\\
            {\bf David Yost}, Centre for Informatics and Applied Optimization,  \\ Federation University Australia, d.yost@federation.edu.au\\}
  \date{}
\begin{document}
  \maketitle
  \begin{abstract}
   In this paper, we derive  optimality conditions (Chebyshev approximation) for multivariate functions. The theory of Chebyshev (uniform) approximation for univariate functions is very elegant. The optimality conditions are based on the notion of alternance (maximal deviation points with alternating deviation signs). It is not very straightforward, however, how to extend the notion of alternance to the case of multivariate functions.   There have been several attempts to extend the theory of Chebyshev approximation to the case of multivariate functions. We propose an alternative approach, which is based on the notion of
convexity and nonsmooth analysis.
     \end{abstract}
  \textbf{Keywords:} polynomial splines, Chebyshev approximation, free knots, quasidifferential, best approximation conditions\\
\textbf {Math subject classification}[2010]: {49J52, 90C26, 41A15, 41A50}
\section{Introduction}\label{sec:introduction}

In this paper, we obtain Chebyshev (uniform) approximation optimality conditions for multivariate functions. The theory of Chebyshev approximation for univariate functions was built in the late nineteenth (Chebyshev) and twentieth century (just to name a few \cite{nurnberger, rice67, Schumaker68}). In most cases, the authors were working on polynomial and polynomial spline approximations, however, other types of  functions (e.g., trigonometric polynomials) have also been used. This  theory is very elegant. In most cases, the optimality conditions are based on the notion of alternance (that is, maximal deviation points with alternating deviation signs).

There have been several attempts to extend this theory to the case of multivariate functions. One of them is \cite{rice63}. In this paper the author underlines the fact that the main difficulty is to extend the notion of alternance to the case of more that one variable.
The main obstacle is that it is not very easy to extend the notion of monotonicity to the case of several variables. Also, several studies have been done in the area of multivarite interpolation~\cite{Nurn_Davydov98multivar_interpolation, Nurnberger_multivariate_inter}, where triangulation based approaches were used to extend the notion of polynomial splines to the case of multivariate function. 

The objective function, appearing in Chebyshev approximation optimisation problems is nonsmooth (minimisation of the maximal absolute deviation). Therefore, it is logical to use nonsmooth optimisation techniques to tackle this problem. In this paper we propose an approach, which is based on the notion of convex function subdifferentials~\cite{Rockafellar70}. Subdifferentials can be considered as generalisation of the notion of gradients for convex nondifferential functions. 


Another powerful nonsmooth analysis tool, quasidifferentiability, has been successfully applied to improve the existing optimality conditions in the case of free knot polynomial spline approximation~\cite{su13}. In particular, these techniques allowed the researchers to overcome the difficulties highlighted in~\cite{FreeKnotsOpenProblem96} as ``the existing optimisation tools are not adapted to this problem, due to its nonconvex and nonsmooth nature''. Quasidifferentiability is one of the modern nonsmooth optimisation approaches, which is not well-known outside of the optimisation research community. This technique, however, is very powerful and enables one to work efficiently with nonsmooth and nonconvex functions. In particular, it has been successfully applied to improve the existing~\cite{nurnberger} necessary optimality conditions for the case of free knots polynomial splines~\cite{su10,su13}.

Quasidifferentiability can be treated as a generalisation of subdifferentials to the case of nonconvex functions. In this study, however,   the objective function is convex and therefore, it is sufficient to use subdifferentials, since this tool is simpler and easier to use and understand.

 The paper is organised as follows. In section~\ref{sec:optimality conditions} we present the most relevant results from the theory of convex and nonsmooth analysis, that are essential to obtain our optimality conditions.
 Then, in the same  section, we investigate the extremum properties of the objective function, appearing in Chebyshev approximation problems, from the points of view of convexity and nonsmooth analysis. In section~\ref{sec:optimality conditions} we obtain our main results. In section~\ref{seq:relation_with_existing_univariate} we demostrate the relation between the obtained results and the classical ones (alternance-based). Then in section~\ref{seq:relation_with_existing_multivariate} we demonstrate the relation with other optimality results (multivariate case), obtained by J. Rice~\cite{rice63}. Finally, in section~\ref{sec:conclusions} we draw our conclusions and underline further research directions.

 \section{Optimality conditions}\label{sec:optimality conditions}

 \subsection{Convexity of the objective}\label{ssec:convexObjective}
 Let us now formulate the objective function. Suppose that a continuous function $f(\x)$ is to be approximated by a function
 \begin{equation}\label{eq:model_function}
 L(A,\x)=a_0+\sum_{i=1}^{n}a_ig_i(\x),
 \end{equation}
 where $g_i(\x)$ are the basis functions and the multipliers $A = (a_1,\dots,a_n)$ are the corresponding coefficients. At a point \(x\) the deviation between the function \(f\) and the approximation is:
 \begin{equation}
   d(A,x) = |f(\x) - L(A,\x)|.
\end{equation}
   \label{eq:deviation}
 Then we can define the uniform approximation error over the set \(Q\) by
 \begin{equation}
   \label{eq:uniformdeviation}
\Psi(A)=\sup_{\x\in Q} \max\{f(\x)-a_0-\sum_{i=1}^{n}a_ig_i(\x),a_0+\sum_{i=1}^{n}a_ig_i(\x)-f(\x)\}.
\end{equation}
 The approximation problem can be formulated as follows.
 \begin{equation}\label{eq:obj_fun_con}
   \mathrm{minimise~}\Psi(A) \mathrm{~subject~to~} A\in \R^{n+1}.
 \end{equation}

 We will consider two cases.
 \begin{description}
   \item[Continuous case] the set $Q$ is a hyperbox, such that $c_i\leq x_i\leq d_i,~i=1,\dots,d$.
   \item[Discrete case] the set \(Q\) is a finite set of points.
 \end{description}

 Since the function \(L(A,\x)\) is linear in \(A\), the approximation error function \(\Psi(A)\), as the supremum of affine functions, is convex. Furthermore, its subdifferential at a point \(A\) is trivially obtained using the active affine functions in the supremum:
 \begin{equation}
   \label{eq:subdifferentialObjective}
   \partial \Psi(A) = \co\left\{ \begin{pmatrix}
1\\
g_1(\x)\\
g_2(\x)\\
\vdots \\
g_n(\x)
\end{pmatrix}: \x \in E^+,-\begin{pmatrix}
1\\
g_1(\x)\\
g_2(\x)\\
\vdots \\
g_n(\x)
\end{pmatrix}: \x\in E^-\right\},
\end{equation}
 where \(E^+\) and \(E^-\) are respectively the points of maximal positive and negative deviation:
 \begin{align*}
   E^+ &= \Big\{\x\in Q:  L(A,\x) - f(\x) = \max_{\y\in Q} d(A,\y)\Big\}\\
   E^- &= \Big\{\x\in Q: f(\x) - L(A,\x) = \max_{\y\in Q} d(A,\y)\Big\}.
 \end{align*}

\subsection{Optimality conditions: general case}\label{ssec:opt_general}

\begin{theorem}\label{thm:main}
  The convex hulls of the vectors $(g_1(\x),\dots,g_n(\x))^T,$ built over positive and negative maximal deviation points intersect.
\end{theorem}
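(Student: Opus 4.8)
The plan is to read off the claim as the first-order optimality condition for the convex program~\eqref{eq:obj_fun_con}. Since $\Psi$ is convex, as observed just before the theorem, a point $A$ minimises $\Psi$ if and only if $\zero\in\partial\Psi(A)$, and I would invoke this standard characterisation from the convex-analysis preliminaries of Section~\ref{sec:optimality conditions}. The entire argument then reduces to unpacking what the membership $\zero\in\partial\Psi(A)$ means, using the explicit description of $\partial\Psi(A)$ in~\eqref{eq:subdifferentialObjective}.

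First I would express the zero vector as a convex combination of the generators appearing in~\eqref{eq:subdifferentialObjective}: there exist weights $\lambda_\x\ge 0$ for $\x\in E^+$ and $\mu_\x\ge 0$ for $\x\in E^-$, with $\sum_{\x\in E^+}\lambda_\x+\sum_{\x\in E^-}\mu_\x=1$, such that
\[
  \sum_{\x\in E^+}\lambda_\x \begin{pmatrix}1\\ g_1(\x)\\ \vdots\\ g_n(\x)\end{pmatrix}
  -\sum_{\x\in E^-}\mu_\x \begin{pmatrix}1\\ g_1(\x)\\ \vdots\\ g_n(\x)\end{pmatrix}=\zero .
\]
The decisive step, and the place where the constant term $a_0$ in the model~\eqref{eq:model_function} earns its keep, is to read off the top coordinate: it gives $\sum_{\x\in E^+}\lambda_\x=\sum_{\x\in E^-}\mu_\x$, and together with the normalisation this common value must equal $\tfrac12$. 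In particular neither $E^+$ nor $E^-$ can be empty, and each family of weights carries exactly half of the total mass.

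Next I would examine the remaining $n$ coordinates, which yield
\[
  \sum_{\x\in E^+}\lambda_\x \begin{pmatrix} g_1(\x)\\ \vdots\\ g_n(\x)\end{pmatrix}
  =\sum_{\x\in E^-}\mu_\x \begin{pmatrix} g_1(\x)\\ \vdots\\ g_n(\x)\end{pmatrix}.
\]
Rescaling by the factor $2$, the coefficients $2\lambda_\x$ and $2\mu_\x$ each sum to $1$, so the left-hand side is a genuine convex combination of the vectors $(g_1(\x),\dots,g_n(\x))^T$ over $\x\in E^+$ and the right-hand side is such a combination over $\x\in E^-$. Their equality exhibits a common point of the two convex hulls, which is precisely the assertion of the theorem.

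The main obstacle I anticipate is not the algebra above but the legitimacy of writing $\zero$ as a \emph{finite} convex combination in the continuous case, where $E^+$ and $E^-$ may be infinite. Here I would rely on the compactness of the hyperbox $Q$ together with continuity of the $g_i$, which make $E^+$ and $E^-$ compact and the supremum in~\eqref{eq:uniformdeviation} attained; Carath\'eodory's theorem then guarantees that any element of the convex hull in~\eqref{eq:subdifferentialObjective}, in particular $\zero$, is a convex combination of at most $n+2$ generators, so the finite-sum manipulation is justified and the convex hulls in the statement are themselves compact.
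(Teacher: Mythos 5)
Your proposal is correct and follows essentially the same route as the paper: both arguments characterise optimality by $\zero\in\partial\Psi(A^*)$, use the formula \eqref{eq:subdifferentialObjective}, read off the first coordinate to force each side's weights to total $\tfrac12$, and then rescale to exhibit a common point of the two convex hulls of $(g_1(\x),\dots,g_n(\x))^T$. Your closing remark on compactness and Carath\'eodory in the continuous case is the same appeal the paper makes (if anything, slightly more carefully justified), so there is nothing substantive to add.
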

\begin{proof}
The necessary and sufficient condition for the optimality of a vector \(A^*\) for the convex problem \eqref{eq:obj_fun_con} is
\[
  \zero_{n+1} \in \partial \Psi(A^*).
\]
Note that due to Caratheodory´s theorem, $\zero$ can be constructed as a convex combination of a finite number of points. Namely, since the dimension of the corresponding space is $n+1$, it can be done using at most $n+2$ points.
  
By the formulation of the subdifferential of \(\Psi\) \(\partial\Psi(A)\) \eqref{eq:subdifferentialObjective}, there exists a nonnegative  number \(\gamma \leq 1\) and two vectors 
\[
  g^+ \in \co\left\{ \begin{pmatrix}
1\\
g_1(\x)\\
g_2(\x)\\
\vdots \\
g_n(\x)
\end{pmatrix}: \x \in E^+\right\}, \mathrm{~and~}
  g^- \in \co\left\{ \begin{pmatrix}
1\\
g_1(\x)\\
g_2(\x)\\
\vdots \\
g_n(\x)
\end{pmatrix}: \x \in E^-\right\}
\]
such that \(\zero = \gamma g^+ - (1-\gamma) g^-\). Noticing that the first coordinates \(g^+_1 = g^-_1 = 1\), we see that \(\gamma = \frac{1}{2}\). This means that \(g^+ - g^- = 0\). This happens if and only if
 \begin{equation}\label{eq:opt_main2}
 \co\left\{
\left(
\begin{matrix}
1\\
g_1(\x)\\
g_2(\x)\\
\vdots
\\
g_n(\x)\\
\end{matrix}
\right): \x \in E^+
 \right
  \}\cap
  \co\left\{
\left(
\begin{matrix}
1\\
g_1(\x)\\
g_2(\x)\\
\vdots
\\
g_n(\x)\\
\end{matrix}
\right): \x \in E^-
 \right \}\ne\emptyset.
 \end{equation}
 As noted before, the first coordinates of all these vectors are the same, and therefore the theorem is true.
 \end{proof}

\subsection{Optimality conditions for multivariate linear functions}
\label{ssec:opt_linear_multi}
In the case of linear functions (multidimensional case), $n=d$ and  the functions $g_i=x_i,$ $i=1,\dots,n$.  Then theorem~\ref{thm:main} can be formulated as follows.
\begin{theorem}\label{thm:main_lin}
The convex hull of the maximal deviation points with positive deviation and convex hull of the maximal deviation points with negative deviation have common points.
\end{theorem}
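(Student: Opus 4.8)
The plan is to derive Theorem~\ref{thm:main_lin} as a direct specialisation of Theorem~\ref{thm:main}. In the linear multivariate setting we have $n=d$ and $g_i(\x)=x_i$ for $i=1,\dots,n$, so the vectors $(g_1(\x),\dots,g_n(\x))^T$ appearing in Theorem~\ref{thm:main} are simply the points $\x=(x_1,\dots,x_n)^T$ themselves. Consequently the two convex hulls built over the maximal deviation points with positive and negative deviation are nothing other than the convex hulls of those deviation points as subsets of $\R^n$. The statement to be proved therefore reduces to showing that these two convex hulls intersect, which is exactly the conclusion of Theorem~\ref{thm:main} once the substitution $g_i(\x)=x_i$ is made.

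First I would recall the optimality condition established in Theorem~\ref{thm:main}, namely that the convex hulls of the vectors $(g_1(\x),\dots,g_n(\x))^T$ over $E^+$ and over $E^-$ have a common point. Then I would substitute the explicit basis functions $g_i(\x)=x_i$ into the intersection condition~\eqref{eq:opt_main2}, observing that each augmented vector $(1,g_1(\x),\dots,g_n(\x))^T$ becomes $(1,x_1,\dots,x_n)^T$, i.e.\ the point $\x$ lifted by a leading coordinate equal to $1$. The shared leading coordinate is what lets us project the intersection in $\R^{n+1}$ down to an intersection in $\R^n$: a common point of the two lifted hulls has first coordinate $1$, and dropping that coordinate yields a common point of $\co(E^+)$ and $\co(E^-)$ in $\R^n$.

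Since this is a straightforward specialisation, I do not anticipate a genuine obstacle; the only point requiring a moment of care is the passage between the $(n+1)$-dimensional intersection of Theorem~\ref{thm:main} and the $n$-dimensional intersection claimed here. I would make explicit that, because the first coordinate of every generating vector equals $1$, the convex-hull operation commutes with the affine embedding $\x\mapsto(1,\x)^T$, so the projection forgetting the first coordinate is a bijection between the lifted hulls and the hulls of $E^+$ and $E^-$ in $\R^n$. This guarantees that the nonempty intersection in $\R^{n+1}$ descends to a nonempty intersection of $\co(E^+)$ and $\co(E^-)$, which is precisely the assertion that the two convex hulls have common points.
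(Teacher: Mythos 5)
Your proposal is correct and matches the paper's approach: the paper states Theorem~\ref{thm:main_lin} as an immediate specialisation of Theorem~\ref{thm:main} with $n=d$ and $g_i(\x)=x_i$, which is exactly your argument. Your extra care about projecting the intersection of the lifted hulls in $\R^{n+1}$ (first coordinate $1$) down to $\R^n$ is the same observation the paper already makes at the end of its proof of Theorem~\ref{thm:main}, so nothing differs in substance.
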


Theorem~\ref{thm:main_lin} can be considered as an alternative formulation to the necessary and sufficient optimality conditions that are based on the notion of alternance. Clearly, theorem~\ref{thm:main_lin} can be used in univariate cases, since the location of the alternance points insures the common points for the corresponding convex hulls, constructed over the maximal deviation points with positive and negative deviations respectively.

Note that in general $d\leq n$.

\subsection{Optimality conditions for multivariate polynomial (non-linear)  functions}
\label{ssec:opt_polynomial_multi}

We start from introducing the following definitions and notation.

\begin{definition}
An exponent vector
$$e=(e_1,\dots,e_d)\in \R^d,~e_i\in \N,~i=1,\dots,d$$ for $\x\in\R^d$
defines a {\em monomial}
$$\x^e=x_1^{e_1} x_2^{e_2} \dots x_d^{e_d}.$$
\end{definition}

\begin{definition}
A product $c\x^e,$ where $c\ne 0$ is called the term, then a multivariate polynomial is a sum of a finite number of terms.
\end{definition}

\begin{definition}
The degree of a monomial $\x^e$ is the sum of the components of $e$:
$$\deg(\x^e)=\sum_{i=1}^{d}e_i.$$
\end{definition}

\begin{definition}
The degree of a polynomial is the largest degree of the composing it monomials.
\end{definition}

Let us consider some essential properties of polynomials and monomials.
\begin{enumerate}
\item For any exponential $e=(e_1,\dots e_d),$ such that $\deg \x^e=m$ the degree of the monomial $\x^{\tilde{e}}=m+1,$ where the exponential $\tilde{e}$ has been obtained from $e$ by substituting one of its components $e_k$ by $e_k+1,$ $k=1,\dots,n.$ Any monomial of degree $m+1$ can be obtained in such a way. In general, there may be more than one way to do this.
\item For any exponential $e=(e_1,\dots e_d),$ such that $\deg \x^e=m$ the degree of the monomial $\x^{\tilde{e}}=m-1,$ where the exponential $\tilde{e}$ has been obtained from $e$ by substituting one of its components positive components $e_k>0$ by $e_k-1,$ $k=1,\dots,n.$ Any monomial of degree $m-1$ can be obtained in such a way. In general, there may be more than one way to do this.
\end{enumerate}

 In general, a polynomial of degree $m$ can be obtained as follows:
 \begin{equation}\label{eq:polynomials}
 P^m(x)=a_0+\sum_{i=1}^{n}a_iM_i(\x,e),
\end{equation}
where $a_i$ are the coefficients and $g_i=M_i$ are the monomials, such that $\deg{M_i}\leq m$ and there exists a monomial $M_k,$ such that $\deg(M_k)=m.$ Any polynomials $P^m$ from~(\ref{eq:polynomials}) can be presented as the sum of a lower degree polynomials ($m-1$ or less) and a finite number of terms that correspond to the monomials of degree $m$. The following lemma holds.

\begin{lemma}\label{lem:monomial}
Consider two sets of non-negative coefficients
\begin{itemize}
\item $\alpha_i\geq ,~i=1,\dots,n$ such that $\sum_{i=1}^{n}\alpha_i=1$;
\item $\beta_i\geq ,~i=1,\dots,n$ such that $\sum_{i=1}^{n}\beta_i=1$.
\end{itemize}
If
\begin{equation}\label{eq:lem_1}
\sum_{i=1}^{n}\alpha_ia_ix_i=\sum_{i=1}^{n}\beta_ib_iy_i
\end{equation}
and \begin{equation}\label{eq:lem_2}
\sum_{i=1}^{n}\alpha_ia_i=\sum_{i=1}^{n}\beta_ib_i
\end{equation}
then for any scalar $\delta$ the following equality holds
\begin{equation}\label{eq:lem_3}
\sum_{i=1}^{n}\alpha_ia_i(x_i-\delta)=\sum_{i=1}^{n}\beta_ib_i(y_i-\delta).
\end{equation}
\end{lemma}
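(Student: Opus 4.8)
The plan is purely computational: this is a linearity statement in the shift parameter $\delta$, so I would expand both sides of \eqref{eq:lem_3} and reduce everything to the two hypotheses \eqref{eq:lem_1} and \eqref{eq:lem_2}. No clever idea is needed; the only content is the bookkeeping.

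First I would distribute the subtraction on each side. The left-hand side of \eqref{eq:lem_3} splits as
\[
  \sum_{i=1}^{n}\alpha_i a_i (x_i - \delta) = \sum_{i=1}^{n}\alpha_i a_i x_i - \delta \sum_{i=1}^{n}\alpha_i a_i,
\]
and the right-hand side splits as
\[
  \sum_{i=1}^{n}\beta_i b_i (y_i - \delta) = \sum_{i=1}^{n}\beta_i b_i y_i - \delta \sum_{i=1}^{n}\beta_i b_i.
\]
Next I would subtract the second display from the first: the terms free of $\delta$ cancel by \eqref{eq:lem_1}, and the two coefficients of $\delta$ are equal by \eqref{eq:lem_2}, so the difference is $0 - \delta\cdot 0 = 0$ for every scalar $\delta$. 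This is exactly \eqref{eq:lem_3}.

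I do not expect any genuine obstacle here, since the result follows immediately from the two equalities together with the fact that each side is affine in $\delta$. The one point worth flagging is that the constraints $\alpha_i,\beta_i\ge 0$ and $\sum_i\alpha_i=\sum_i\beta_i=1$ play no role in the identity itself; they are inherited from the convex-combination setting in which the lemma will be applied, where each generating vector in \eqref{eq:subdifferentialObjective} carries a leading coordinate equal to $1$ (the same structural feature that forced $\gamma=\tfrac{1}{2}$ in the proof of Theorem~\ref{thm:main}). The substantive use of the lemma is presumably to translate the coordinate origin, or to pass between a monomial of degree $m$ and one of degree $m-1$ via the degree-lowering operation described just before the statement, without disturbing an equality of convex combinations; the algebra above guarantees that such a shift preserves the relation. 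I would therefore present the proof in the three lines indicated and move on.
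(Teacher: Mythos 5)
Your proof is correct and is essentially the paper's own argument: both expand the left-hand side of \eqref{eq:lem_3} by distributivity and then apply \eqref{eq:lem_1} and \eqref{eq:lem_2} to the two resulting sums (the paper chains equalities where you subtract the two expansions, which is the same computation). If anything, your write-up is cleaner: the paper's intermediate display contains a harmless transcription slip, writing $\sum_{i}\beta_ib_i-\delta\sum_{i}\beta_ib_iy_i$ where the two sums should be interchanged, before landing on the correct final line.
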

\begin{proof}
\begin{align*}
\sum_{i=1}^{n}\alpha_ia_i(x_i-\delta)=&\sum_{i=1}^{n}\alpha_ia_ix_i-\delta\sum_{i=1}^{n}\alpha_ia_i\\
  =&\sum_{i=1}^{n}\beta_ib_i-\delta\sum_{i=1}^{n}\beta_ib_iy_i\\
  =&\sum_{i=1}^{n}\beta_ib_i(y_i-\delta).
\end{align*}
\end{proof}

Theorem~\ref{thm:main}  can be used to formulate our necessary and sufficient optimality conditions for multivariate polynomial approximations. Each $g_i$ corresponds to a monomial and, in general, we need to use all the possibilities to construct the monomials, keeping the corresponding monomial degree at most $m.$

Note that due to lemma~\ref{lem:monomial} one can assume that all the $x_i$ in the monomials are non-negative, since $\delta$ can be chosen as
$$\min\{\min_{i=1,\dots,d}x_i,\min_{i=1,\dots,d}y_i\}.$$  Then a necessary and sufficient optimality condition can be formulated as follows.

\begin{theorem}\label{thm:pol}
A polynomial of degree $m$ is optimal if and only if there exists a pair of sets of non-negative coefficients (with at least one positive coefficient in each set)
$$\alpha_{i+},~\alpha_{i-},~{i+},{i-}=1,\dots,n+2,~\sum_{i+=1}^{n+2}\alpha_{i+}=\sum_{i-=1}^{n+2}\alpha_{i-}=1,$$
such that for any monomial~$M_j,~j=1,\dots,n$ of degree at most $m$ the following equality holds
\begin{equation}\label{eq:monom}
\sum_{i+=1}^{n+2}\alpha_{i+}M_j(\x^{i+})=\sum_{i-=1}^{n+2}\alpha_{i-}M_j(\x^{i-}),
\end{equation}
where $\x^{i+}$ and $\x^{i-}$ are maximal deviation points with positive and negative deviation sign respectively.
\end{theorem}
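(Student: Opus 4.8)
The plan is to obtain Theorem~\ref{thm:pol} as the specialization of Theorem~\ref{thm:main} to the polynomial basis. The polynomial model \eqref{eq:polynomials} is exactly the affine model \eqref{eq:model_function} with each basis function $g_i$ taken to be the monomial $M_i$, $\deg M_i\le m$. Consequently Theorem~\ref{thm:main} applies without modification: $P^m$ solves \eqref{eq:obj_fun_con} if and only if the convex hulls of the vectors $(M_1(\x),\dots,M_n(\x))^T$ built over the positive-deviation set $E^+$ and the negative-deviation set $E^-$ intersect. The remaining work is to transcribe this intersection, i.e.\ condition \eqref{eq:opt_main2}, into the coordinatewise moment equalities \eqref{eq:monom}.

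First I would invoke \eqref{eq:opt_main2} to fix a vector lying in both convex hulls and write it as a convex combination on each side; this produces nonnegative weights $\alpha_{i+}$ and $\alpha_{i-}$ carried by finitely many points $\x^{i+}\in E^+$ and $\x^{i-}\in E^-$. Equating the two representations coordinate by coordinate, the leading (constant) coordinate gives $\sum_{i+}\alpha_{i+}=\sum_{i-}\alpha_{i-}$, which as a pair of convex combinations is the normalization $\sum\alpha_{i+}=\sum\alpha_{i-}=1$, and each of the remaining coordinates is precisely \eqref{eq:monom} for the corresponding monomial $M_j$. Carath\'eodory's theorem in the $(n+1)$-dimensional space---already used in the proof of Theorem~\ref{thm:main}---caps the number of active points at $n+2$ on each side, which fixes the index ranges $i\pm=1,\dots,n+2$; and because each weight family is a convex combination, at least one $\alpha_{i+}$ and one $\alpha_{i-}$ is strictly positive, giving the final clause of the statement.

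The only step that is more than bookkeeping is the reduction to nonnegative coordinates announced before the theorem, and this is where Lemma~\ref{lem:monomial} enters. Translating every coordinate by a common scalar $\delta$ sends each monomial $M_j(\x)$ to a polynomial in the shifted variables whose terms are again monomials of degree at most $m$; Lemma~\ref{lem:monomial} is exactly the multiplicative building block showing that if \eqref{eq:monom} holds for a monomial $M$ and for $x_kM$, then it holds for $(x_k-\delta)M$. Applying it one factor at a time---building up a shifted degree-$m$ monomial from its factors and using the moment equalities for all the intermediate monomials, each of degree at most $m$---propagates the invariance to every shifted monomial, and hence justifies choosing $\delta=\min\{\min_i x_i,\min_i y_i\}$ so that \eqref{eq:monom} may be read with nonnegative coordinates. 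I expect this propagation to be the main obstacle: Lemma~\ref{lem:monomial} only couples the constant and linear moments of a single factor, so some care is required to organize the factor-by-factor induction so that every intermediate monomial invoked indeed has degree at most $m$ and is therefore covered by \eqref{eq:monom}. Everything else is a faithful rewriting of Theorem~\ref{thm:main} in the monomial basis.
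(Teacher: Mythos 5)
Your first two paragraphs coincide with the paper's own proof, which is given in a single line: Theorem~\ref{thm:pol} is simply Theorem~\ref{thm:main} rewritten in the monomial basis (each $g_i$ taken to be a monomial $M_i$ of degree at most $m$), with Carath\'eodory's theorem capping the number of active points at $n+2$ on each side. Your third paragraph is superfluous rather than wrong: the theorem as stated never asserts nonnegativity of the coordinates, so the shift via Lemma~\ref{lem:monomial} --- which the paper places \emph{before} the theorem as a separate convenience remark, not inside the proof --- is not required, and what you identify as ``the main obstacle'' is not an obstacle to this statement at all.
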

\begin{proof}
The proof is obvious, since this is a reformulation of theorem~\ref{thm:main} for the case of polynomials. Similar to theoremref{thm:main}, Caratheodory´s theorem is used to prove that it is possible to do all the necessary constructions using at most $n+2$ points.
\end{proof}

Note that any monomial $\prod_{i=1}^{l} x_i^{e_i}$ of degree $m\geq 1,$ such that $m=\sum_{i=1}^{l}e_i$ can be presented as a product of  a lower degree monomial and $x_i$ (assume that $e_i\geq 1$). Therefore, theorem~\ref{thm:pol} can be also formulated as follows.
\begin{theorem}\label{thm:pol1}
A polynomial of degree $m$ is optimal if and only if there exists a pair of sets of non-negative coefficients (with at least one positive coefficient in each set)
$$\alpha_{i+},~\alpha_{i-},~{i+},{i-}=1,\dots,n+2,~\sum_{i+=1}^{n+2}\alpha_{i+}=\sum_{i-=1}^{n+2}\alpha_{i-}=1,$$
such that for any monomial $M$ of degree at most $m-1$  the following equality holds
\begin{equation}\label{eq:monom1}
\sum_{i+=1}^{n+2}\alpha_{i+}M(\x^{i+})\x^{i+}=\sum_{i-=1}^{n+2}\alpha_{i-}M(\x^{i-})\x^{i-},
\end{equation}
where $\x^{i+}$ and $\x^{i-}$ are maximal deviation points with positive and negative deviation sign respectively.
\end{theorem}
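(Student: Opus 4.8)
The plan is to prove Theorem~\ref{thm:pol1} not from scratch but as a transcription of Theorem~\ref{thm:pol}: I would show that condition~\eqref{eq:monom1}, under the shared normalisation $\sum_{i+=1}^{n+2}\alpha_{i+}=\sum_{i-=1}^{n+2}\alpha_{i-}=1$, is logically equivalent to condition~\eqref{eq:monom}. Since Theorem~\ref{thm:pol} already supplies the optimality characterisation, this equivalence yields the result at once. The whole argument therefore reduces to matching the two families of scalar equalities that the two conditions encode.

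The first step is to read \eqref{eq:monom1} as an identity between vectors of $\R^d$ and take it coordinate by coordinate. For each $k=1,\dots,d$ its $k$-th component is
\[
\sum_{i+=1}^{n+2}\alpha_{i+}M(\x^{i+})(\x^{i+})_k=\sum_{i-=1}^{n+2}\alpha_{i-}M(\x^{i-})(\x^{i-})_k,
\]
where $(\x^{i+})_k$ denotes the $k$-th coordinate of the point $\x^{i+}$. But $M(\x^{i+})(\x^{i+})_k$ is exactly the value of the monomial $M\cdot x_k$ at $\x^{i+}$, so this line is precisely condition~\eqref{eq:monom} applied to the single test monomial $M\cdot x_k$. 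Hence imposing \eqref{eq:monom1} for all $M$ of degree at most $m-1$ is the same as imposing \eqref{eq:monom} for every product $M\cdot x_k$ with $\deg M\le m-1$ and $k=1,\dots,d$.

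The key step is then to check that these products are exactly the monomials that appear in \eqref{eq:monom}. Using the note preceding the theorem (equivalently, the monomial properties listed above), every monomial of degree $m'$ with $1\le m'\le m$ factors as $x_k$ times a monomial of degree $m'-1\le m-1$, obtained by peeling off one variable $x_k$ with positive exponent. Consequently, as the pair $(M,k)$ runs over all monomials of degree at most $m-1$ and all coordinates, the products $M\cdot x_k$ sweep out precisely the monomials of degree between $1$ and $m$, which is the full list of non-constant basis monomials $M_1,\dots,M_n$ occurring in \eqref{eq:monom}. Several pairs $(M,k)$ may return the same monomial, but this redundancy is harmless, since the equalities are required for all such pairs. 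The only monomial never produced this way is the constant $M_0\equiv 1$, and its instance of \eqref{eq:monom} reads $\sum_{i+}\alpha_{i+}=\sum_{i-}\alpha_{i-}$, which already holds by the normalisation common to both theorems. Combining these observations with Theorem~\ref{thm:pol} gives the claim.

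I expect the only point needing genuine care — the main obstacle, such as it is — to be the bookkeeping in this coordinate-wise reading together with the surjectivity claim: one must verify that factoring out a single variable reaches every monomial of positive degree without gaps, and, separately, confirm that the degree-$0$ monomial is absorbed by the normalisation rather than silently dropped. Once these two facts are secured the argument is a direct translation, in keeping with the reformulation style of the proofs of Theorems~\ref{thm:main} and~\ref{thm:pol}.
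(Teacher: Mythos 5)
Your proposal is correct and follows essentially the same route as the paper, which justifies Theorem~\ref{thm:pol1} purely as a reformulation of Theorem~\ref{thm:pol} via the observation that every monomial of positive degree factors as a coordinate $x_k$ times a monomial of one lower degree. Your write-up is in fact more careful than the paper's one-line justification: the coordinate-wise reading of \eqref{eq:monom1}, the check that the products $M\cdot x_k$ sweep out all non-constant monomials of degree at most $m$, and the remark that the constant monomial is absorbed by the normalisation $\sum\alpha_{i+}=\sum\alpha_{i-}=1$ are exactly the details the paper leaves implicit.
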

Theorem~\ref{thm:pol1} tells us that the convex hulls built over positive and negative maximal deviation points intersect. Moreover, the linear combinations of these two sets with the coefficients $\alpha_{i+}M(\x_{i+})$ and $\alpha_{i-}M(\x_{i-})$, where $M(\x)$ is a monomial of degree at most $m-1$.

Note that these linear combinations are not necessary convex, since some of the coefficients $M(\x_{i+})$ and $M(\x_{i-})$ may be negative. One can make these coefficients non-negative by applying lemma~\ref{lem:monomial}. This can be achieved in a number of ways. For example, for a monomial 
$$M(\x)=x_1^{e_1}\times x_2^{e_2}\times\dots\times x_d^{e_d},$$
where $e_1,\dots, e_d$ are non-negative integers, representing the degree, such that $$\sum_{j=1}^{d}e_j\leq m-1,$$ apply lemma~\ref{lem:monomial}, where $\delta$ is chosen as follows  
\begin{equation}\label{eq:delta_min}
\delta=\min_{j=1,\dots, d, e_j>0}x_j
\end{equation}
or 
\begin{equation}\label{eq:delta_max}
\delta=-\max_{j=1,\dots, d, e_j>0}x_j.
\end{equation}

Necessary and sufficient optimality conditions formulated in theorems~\ref{thm:main}-\ref{thm:pol1} are not very easy to verify. In this paper we develop a necessary optimality condition that is more practical. 

Assume that our polynomial approximation is optimal and consider all the monomials of degree $m-1$
\begin{equation}
M^{m-1}=x_1^{e_1}x_2^{e_2}\dots x_l^{e_l},~\text{such~that}~\sum_{i=1}^{d}e_i=m-1.\end{equation}
Then all the monomials of degree $m$ can be obtained by multiplying one of the monomials of degree $m-1$ by one of the coordinated of $\x=(x_1,\dots,x_d)^T.$ Therefore, there exists a pair of sets of positive coefficients (remove zero coefficients for simplicity) 
\begin{equation}\label{eq:coefficients}
\alpha_{i+},~i+=1,\dots,N^+,~\alpha_{i-}=1,\dots,N^-,
\end{equation}  
 such that
 \begin{equation}
 \sum_{i+=1}^{N+}\alpha_{i+}M^{m-1}(1,\x_{i+})^T=\sum_{i-=1}^{N-}\alpha_{i-}M^{m-1}(1,\x_{i-})^T
 \end{equation}
Assume that in the monomial $M^{m-1}$ there exists $j,$ such that $e_j>0.$ Consider 
$$\delta^j_{min}=\min\{\min_{i+=1,\dots,N+}x^{i+}_j,\min_{i-=1,\dots,N-}x^{i-}_j\}$$
and
$$\delta^j_{max}=\max\{\max_{i+=1,\dots,N+}x^{i+}_j,\max_{i-=1,\dots,N-}x^{i-}_j\},$$
where $x^{i+}_j$ is the $j-$th coordinate of the maximal deviation points $\x_{i+}$ and $x^{i-}_j$ is the $j-$th coordinate of the maximal deviation points $\x_{i-}.$ 
Apply lemma~\ref{lem:monomial} with $\delta=\delta^j_{min}$ or $\delta=\delta^j_{max}$ and remove all the maximal deviation points with the minimal (maximal) value for the $j-$th coordinate (there may be more than one point), since the corresponding monomial is zero.  At the end of this process, the convex hulls of the remaining maximal deviation points should intersect or all the maximal deviation points are removed.

%

The following algorithm can be used to verify this necessary optimality condition. 

\begin{center}{\Large\bf
Necessary optimality conditions verification for best polynomial Chebyshev approximation: degree $m>1,$ multivariate case.}
\end{center}
\begin{enumerate}
\item[Step 1] Identify maximal deviation points that correspond to positive and negative deviation:
$$E^+=\{x^{i+},~i=1,\dots,N^+\};~E^-=\{x^{i-},~i=1,\dots,N^-\}.$$
\item[Step 2] For each dimension $k:$ $k=1,\dots,l$ identify
$$\delta=\min \{ \min_{j=1,\dots,l}x^{i+}_j,\min_{j=1,\dots,l}x^{i-}_j\}$$
or
$$\delta=\max \{ \max_{j=1,\dots,l}x^{i+}_j,\max_{j=1,\dots,l}x^{i-}_j\}$$
where $x^{i+}_j$ and $x^{i-}_j$ are the $j-$th coordinates of $\x^{i+}$ and $\x^{i-}$ respectively.
\item[Step 3] Apply the following coordinate transformation (to transform the coordinates of the maximal deviation points to non-negative numbers):
$$\tilde{x}^{i+}_j=x^{i+}_j-\delta;~\tilde{x}^{i-}_j=x^{i-}_j-\delta.$$
\item[Step 4] Points reduction
Remove  maximal deviation points  whose  updated coordinates have a zero at the corresponding  coordinate and assign $m_{new}=m-1$.

If $m_{new}>1$ and the remaining sets of maximal deviation (positive and negative) are non-empty GO TO Step 1 for the corresponding lower degree polynomials approximation optimality verification, $m=m_{new}$).

Otherwise GO TO the final step of the algorithm.
\item[Step 5] If the remaining maximal deviation sets are non-empty and the conditions of theorem~\ref{thm:main_lin}  are not  satisfied then the original polynomials is not optimal.
\end{enumerate}

There are two main advantages of this procedure.
\begin{enumerate}
\item It demonstrates how the concept of alternance can be generalised  to the case of multivariate functions.
\item It is based on the verification whether two convex sets are intersecting or not, but since $l\leq n$ it is much easier to verify it after applying the algorithm.
\end{enumerate}

However, the this condition is a necessary optimality condition, but not sufficient. In section~{sec:counterexample} we will present an interesting counterexample, that demonstrates that even more general alternating conditions still remain only necessary optimality conditions for multidimentional functions.

Note that theorem~\ref{thm:main} can be also applied to verify optimality (necessary and sufficient optimality condition). In this case one needs to check if two convex sets are intersecting  in $\R^{n}.$ The above algorithm requires
to check if two convex sets are intersecting  in $\R^{d}$ (considerably lower dimension), however, it only verifies a necessary optimality condition. 

\section{Counterexample}\label{sec:counterexample}
Consider new notation.
\begin{enumerate}
\item [] $\M^m$ is a vector that contains all the monomials of degree $m$ or less;
\item [] $\alpha_{\x}$ is a nonnegative coefficient associated with $\x$.
\end{enumerate}

\begin{color}{blue}
  It is clear that the system:
  \[
    \sum_{\x\in E^+} \alpha_\x \M^m(\x) = \sum_{\x\in E^-} \alpha_\x \M^m(\x) 
  \]
  is equivalent to:
  \begin{align}\refstepcounter{equation}
    \sum_{\x\in E^+} \alpha_\x M^{m-1}(x) &= \sum_{\x\in E^-} \alpha_\x \M^{m-1}(x) \tag{\theequation(0)}\label{eq:n-10}\\
    \sum_{\x\in E^+} \alpha_\x x_i \M^{m-1}(x) &= \sum_{\x\in E^-} x_i\alpha_\x M^{m-1}(x)  & i=1,\ldots,d. \tag{\theequation(i)}\label{eq:n-1i}
  \end{align}

  Consider any vector \({\bf u}\in \R^d\) and scalar \(a\in \R\), and let
  \begin{align*}
    \HH^+ &= \{\x\in \R^d : \langle {\bf u},\x\rangle - a> 0\}\\
    \HH^- &= \{\x\in \R^d : \langle {\bf u},\x\rangle - a< 0\}
  \end{align*}

  and consider the equation \(\sum_{i=1}^d u_i\times \eqref{eq:n-1i} - a\times \eqref{eq:n-10}\). We find that
  \[
    \sum_{\x\in E^+} \alpha_\x (\langle {\bf u},\x\rangle -a )\M^{m-1}(\x) = \sum_{\x\in E^-} \alpha_\x (\langle {\bf u},\x\rangle -a )\M^{m-1}(x)
  \]
  Define 
  \begin{align*}
    A^+ &= \sum_{\y\in (E^+\cap \HH^+)\cup(E^-\cap \HH^-)}\alpha_\y(\langle {\bf u},\y\rangle -a )\\
    A^- &= \sum_{\y\in (E^-\cap \HH^+)\cup(E^+\cap \HH^-)}\alpha_\y(\langle {\bf u},\y\rangle -a )
  \end{align*}
  and
  \begin{align*}
    \hat{\alpha}_\x &= \frac{\alpha_\x }{A^+} & \text{if } \x\in (E^+\cap \HH^+)\cup(E^-\cap \HH^-)\\
    \hat{\alpha}_\x &= \frac{\alpha_\x }{A^-} & \text{if } \x\in (E^-\cap \HH^+)\cup(E^+\cap \HH^-)
  \end{align*}
  Then, we have:
  \begin{equation}
    \sum_{\x\in (E^+\cap \HH^+)\cup(E^-\cap \HH^-)} \hat{\alpha}_\x \M^{m-1}(x) =\sum_{\x\in (E^-\cap \HH^+)\cup(E^+\cap \HH^-)} \hat{\alpha}_\x \M^{m-1}(\x) \label{eq:onelessdegree}
  \end{equation}
  Note that Formula~\eqref{eq:onelessdegree} is similar to Formula~\eqref{eq:monom} with degree \(m-1\). Thus, the above result means that if one runs any hyperplane and inverts the signs on one side of this hyperplane, the formula holds for degree \(m-1\). This implies the following corollaries:

  \begin{enumerate}
    \item Since any \(k\leq d\) points define (not necessarily uniquely) a hyperplane, by setting the pair \(({\bf u},a)\) to be defining this hyperplane, the result for degree \(m-1\) applies, after setting the signs accordingly.
    \item In particular, for if one choses these \(k\) points as the vertices defining a \(k-1\)-face of the polytope \(P=\co\{\x\in E\}\), then all remaining points lie on the same side of the hyperplane. Therefore, by removing any \(k\) face (and facets in particular) of the polytope \(P\), the result holds for degree \(m-1\).
    \item\label{item:removefacets} By iteratively removing any \(m-1\) facets as described above, the remaining polytopes \(P^+\) and \(P^-\) must intersect (that is, the result holds for degree 1).
    \item\label{item:removepoints} Similarly, it is possible to remove any \((m-1)d\) points and, updating the signs accordingly, the remaining polytopes \(P^+\) and \(P^-\) must intersect.
  \end{enumerate}

  \begin{remark}
      It is easy to verify that Condition~\ref{item:removefacets} is exactly equivalent to the alternation criterion in the univariate case. Indeed, after removing \(m-1\) points (which are also facets in the univariate case), there needs to remain at least 3 alternating points to ensure intersection of the remaining polytopes. This means that there must be at least \(m+2=m-1+3\) points, which can be shown to alternate by removing the adequate \(m-1\) points.

      Similarly, Condition~\ref{item:removefacets} is trivial for degree \(m=1\) polynomial approximation, since all these conditions are equivalent to the intersection between \(E^+\) and \(E^-\) being nonempty.
  \end{remark}

  Generally, however, the condition~\eqref{eq:onelessdegree} is not sufficient, as is illustrated in the following example:
  \begin{example}
      Consider the quadratic approximation case and assume \(E^+ = \co\{(0,0),
	  (1,2), (2,0)\}\) and \(E^- = \{(0,1),(1,-1),(2,1)\}\). These extreme deviation points do not satisfy the condition~\eqref{eq:monom}. Yet, they satisfy the condition~\eqref{eq:onelessdegree}. The sets are illustrated in the figure below:

      \begin{center}
	  \begin{tikzpicture}[scale=3,every node/.style={draw,circle,outer sep=1pt, inner sep=2pt}]
	      \coordinate (A) at (0,0) ;
	      \coordinate (B) at (0.5,0.866) ;
	      \coordinate (C) at (1,0) ;
	      \coordinate (D) at (0,0.577) ;
	      \coordinate (E) at (0.5,-0.289);
	      \coordinate (F) at (1,0.577) ;
	      \draw[thick,every node/.append style={fill=black}] (A) node[label=left:A] {} -- (B) node[label=above:B] {} -- (C) node[label=right:C] {} -- cycle;
	      \draw[thick] (D) node[label=left:D] {} -- (E) node[label=below:E] {}  -- (F) node[label=right:F] {} -- cycle;
	  \end{tikzpicture}
      \end{center}

      and below we show that for any hyperplane cutting through the polytope \(\co(E^+\cup E^-)\) the intersection exists (obvious symmetries are omitted).

    \begin{tikzpicture}[every node/.style={draw,circle,outer sep=1pt, inner sep=2pt},
    extended line/.style={shorten >=-#1,shorten <=-#1},
	,extended line/.default=0.5cm]
	\coordinate (A) at (0,0) ;
	\coordinate (B) at (0.5,0.866) ;
	\coordinate (C) at (1,0) ;
	\coordinate (D) at (0,0.577) ;
	\coordinate (E) at (0.5,-0.289);
	\coordinate (F) at (1,0.577) ;
	\draw[extended line,thick,black!20] (A) node[label=left:A] {} -- (D) node[label=left:D] {};
	\draw[thick,every node/.append style={fill=black}] (B) node[label=above:B] {} -- (C) node[label=right:C] {} -- cycle;
	\draw[thick] (E) node[label=below:E] {}  -- (F) node[label=right:F] {} -- cycle;
	\node[star] at (intersection of B--C and E--F) {};
    \end{tikzpicture}
    \hfill
    \begin{tikzpicture}[every node/.style={draw,circle,outer sep=1pt, inner sep=2pt},
    extended line/.style={shorten >=-#1,shorten <=-#1},
	,extended line/.default=0.5cm]
	\draw[extended line,thick,black!20] (A) node[label=left:A] {} -- (B) node[label=above:B] {};
	\draw[thick,every node/.append style={fill=black}] (D) node[label=left:D] {} -- (C) node[label=right:C] {} -- cycle;
	\draw[thick] (E) node[label=below:E] {}  -- (F) node[label=right:F] {} -- cycle;
	\node[star] at (intersection of D--C and E--F) {};
    \end{tikzpicture}
    \hfill
    \begin{tikzpicture}[every node/.style={draw,circle,outer sep=1pt, inner sep=2pt},
    extended line/.style={shorten >=-#1,shorten <=-#1},
	,extended line/.default=0.5cm]
	\draw[extended line,thick,black!20] (A) node[label=left:A] {} -- (F) node[label=right:F] {};
	\draw[thick,every node/.append style={fill=black}] (D) node[label=left:D] {} -- (C) node[label=right:C] {} -- cycle;
	\draw[thick] (E) node[label=below:E] {}  -- (B) node[label=above:B] {} -- cycle;
	\node[star] at (intersection of D--C and E--B) {};
    \end{tikzpicture}
    \hfill
    \begin{tikzpicture}[every node/.style={draw,circle,outer sep=1pt, inner sep=2pt},
    extended line/.style={shorten >=-#1,shorten <=-#1},
	,extended line/.default=0.5cm]
	\draw[extended line,thick,black!20] (A) node[label=left:A] {} -- (C) node[label=right:C] {};
	\draw[thick,every node/.append style={fill=black}] (B) node[label=above:B] {} -- (E) node[label=below:E] {} -- cycle;
	\draw[thick] (D) node[label=left:D] {}  -- (F) node[label=right:F] {} -- cycle;
	\node[star] at (intersection of D--F and E--B) {};
    \end{tikzpicture}

  \end{example}

\end{color}

\section{Relation with classical (univariate) polynomial approximation results}\label{seq:relation_with_existing_univariate}
\subsection{Linear approximation}
For univariate linear approximation the alternance-based optimality conditions are as follows.
\begin{theorem}
There exist three maximal deviation points, such that the sign of maximal deviation is alternating.
\end{theorem}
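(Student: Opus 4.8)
The plan is to obtain this classical alternation condition as the one-dimensional specialisation of Theorem~\ref{thm:main_lin}. Here $d=n=1$ and $g_1(\x)=x$, so each maximal deviation point contributes a single scalar, and the two convex hulls appearing in Theorem~\ref{thm:main_lin} are just the closed intervals $I^+=[\min E^+,\max E^+]$ and $I^-=[\min E^-,\max E^-]$ spanned by the positive and negative maximal deviation points. First I would record two facts that come essentially for free. Both $E^+$ and $E^-$ are nonempty: the common point asserted by Theorem~\ref{thm:main_lin} cannot exist otherwise, and at the level of the subdifferential \eqref{eq:subdifferentialObjective} the relation $\zero\in\partial\Psi(A^*)$ forces both signs to occur, since the first coordinate of every generator is $+1$ on $E^+$ and $-1$ on $E^-$ (this is the $\gamma=\tfrac12$ argument already used in the proof of Theorem~\ref{thm:main}). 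Moreover $E^+\cap E^-=\emptyset$, because at a fixed point the deviation $f(\x)-L(A,\x)$ has a single sign whenever $\Psi(A^*)>0$, which holds in the nonconstant case.

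Next I would translate ``$I^+$ and $I^-$ intersect'' into an interleaving statement. Two closed intervals on the line meet only if an endpoint of one lies in the other, so at least one of the four numbers $\min E^\pm,\max E^\pm$ sits in the opposite interval. Suppose for instance $\min E^-\in I^+$, that is $\min E^+\le\min E^-\le\max E^+$. Disjointness of $E^+$ and $E^-$ upgrades both inequalities to strict ones, since a point cannot be simultaneously a positive and a negative maximal deviation point; this yields $\min E^+<\min E^-<\max E^+$, a negative maximal deviation point strictly between two positive ones. Read left to right, these three points carry the alternating signs $+,-,+$. The three remaining endpoint cases are handled identically, and by the symmetry exchanging the roles of $E^+$ and $E^-$ one obtains in every case one of the two alternating patterns $+,-,+$ or $-,+,-$.

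The only delicate point is the degenerate situation in which one of the sets, say $E^+$, is a singleton $\{x_0\}$, so that $I^+$ collapses to the point $x_0$. In that case the relevant endpoint relation is $x_0\in I^-$, forced by $I^+\cap I^-\ne\emptyset$, and disjointness again makes the containment strict, giving $\min E^-<x_0<\max E^-$; this exhibits the positive point $x_0$ wedged between two negative points, i.e. the pattern $-,+,-$. The one place where care is genuinely needed, and hence the main (if minor) obstacle, is guaranteeing that every inequality can be made strict: this is exactly where the disjointness $E^+\cap E^-=\emptyset$ enters, and it is immediate from $\Psi(A^*)>0$. Collecting the cases produces three maximal deviation points whose deviation signs alternate, which is the assertion.
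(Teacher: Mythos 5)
Your proof is correct, and it follows the paper's intended route of specialising Theorem~\ref{thm:main_lin} to the univariate case $n=d=1$, where the two convex hulls become intervals on the line; but you actually prove the implication in the direction the paper only asserts. The paper's own justification is a single remark arguing that three alternating points force the convex hulls of $E^+$ and $E^-$ to meet --- i.e.\ alternance implies intersection --- which is the \emph{converse} of what is needed to obtain the classical alternation theorem as a consequence of Theorem~\ref{thm:main_lin}. Your argument supplies the needed direction: if the intervals $\co(E^+)$ and $\co(E^-)$ intersect, an endpoint of one lies in the other, and disjointness of $E^+$ and $E^-$ (valid whenever $\Psi(A^*)>0$) upgrades the two containment inequalities to strict ones, exhibiting a point of one sign strictly between two points of the opposite sign. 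You also settle details the paper passes over entirely: nonemptiness of both extremal sets via the $\gamma=\tfrac12$ step from the proof of Theorem~\ref{thm:main}, the singleton case where one hull degenerates to a point, and the role of the nondegeneracy assumption $\Psi(A^*)>0$ in securing strictness. So while the overall strategy coincides with the paper's (both derive the classical statement from Theorem~\ref{thm:main_lin}), your write-up is the complete, correctly oriented version of an argument the paper compresses into one sentence pointing the other way.
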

Clearly, this theorem is a special case of theorem~\ref{thm:main_lin} for $n=1,$ since one of these three maximal deviation points is located between two others and the deviation sign of this middle point is opposite to the sign at the far left and right points. Therefore, the convex hulls of positive and negative maximal deviation points intersect.
\subsection{Nonlinear approximation}
In the case of higher degree polynomials, the proposed algorithm, at each iteration reduces the degree of the polynomial. If we assume that our dimension is one (univariate approximation), then at each iteration we remove one point (minimal or maximal value) and the remaining $m+2-(m-1)=3$ points satisfy the univariate (alternance) condition.
\subsection{Uneven distribution of maximal deviation points}
In the case of univariate approximation the distribution of positive and negative maximal deviation points, involved in optimality verification, is (almost) even. Indeed, if $n$ is even, then each set contains ${n\over 2}+1$ points, otherwise, one set contains  ${n+1\over 2}$ points and another one ${n+1\over 2}+1$ points. The following example demostrates, that this is not always the case for multivariate approximations.
\begin{example}
Consider a discrete function defined at four isolated points $(1,1)$, $(-1,1)$, $(0,-1)$ and $(0,0)$:
$$
f(1,1)=f(-1,1)=f(0,-1)=0,~\text{and}~f(0,0)=2.
$$
Find a best linear approximation to this function.

It is clear that the plane $L(x,y)=1$ is optimal, since the maximal deviation is attained at $(1,1),~(-1,1),~(0,-1)$ and $(0,0)$. Namely, the maximal deviation magnitude is  equal to~1 and  the deviation signs are negative at the points $(1,1),~(-1,1),~(0,-1)$ and positive at $(0,0)$. Therefore, $\NN$ contains three points, while $\PP$ only one.
 \end{example}
\section{Relation with existing multivariate results}\label{seq:relation_with_existing_multivariate}
In~\cite{rice63} Rice gives necessary and sufficient optimality conditions for multivariate approximation. These results are obtained for a very general class of functions, not necessary polynomials. These conditions are fundamentally important, however, it is not very easy to verify them (even in the case of polynomials). Also their relation with the notion of alternance is not very clear. Before formulating Rice's optimality conditions, we need to introduce the following notation and definitions (\cite{rice63}).

Recall that the set of extremal (maximal deviation points) $E$ is divided into two parts as follows:
$$E^+=\{x|x\in E, f(x)-L(A^*,\x)=\|f(x)-L(A^*,\x)\|_{\infty}\}$$
$$E^-=\{x|x\in E, f(x)-L(A^*,\x)=-\|f(x)-L(A^*,\x)\|_{\infty}\},$$
where $A^*$ is a vector of the parameters and $L(A^*,x)$ is the corresponding approximation, defined as in~(\ref{eq:model_function}).
The elements of $E^+$ and $E^-$ are positive and negative extremal points.

\begin{definition}
The point sets $\PP$ and $\NN$ are said to be isolable if there is an $A,$ such that
$$L(A,\x)>0~\x\in\PP,\quad L(A,\x)<0~\x\in \NN.$$
\end{definition}
\begin{definition}
$\Gamma(A)$ is is called an isolating curve if
$$\Gamma(A)=\{\x| L(A,\x)=0\}.$$
\end{definition}
Therefore, the sets $\PP$ and $\NN$ are isolable, if they lie on opposite sides of an isolating curve $\Gamma(A)$.
\begin{definition}
A subset of extremal points is called a critical point set if its positive and negative parts $\PP$ and $\NN$ are not isolable, but if any point is deleted then $\PP$ and $\NN$ are isolable.
\end{definition}
Rice formulated his necessary and sufficient optimality conditions as follows.
\begin{theorem}(Rice~\cite{rice63}) $L(A^*,\x)$ is a best approximation to $f(\x)$ if and only if the set of extremal points of $L(A^*,\x)-f(\x)$ contains a critical point set.
\end{theorem}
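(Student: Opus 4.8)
The plan is to deduce Rice's criterion from our own Theorem~\ref{thm:main} together with a separation argument, thereby recasting ``contains a critical point set'' as ``the extremal set is not isolable.'' First I would record the duality already implicit in Theorem~\ref{thm:main}: the two convex hulls of the vectors $(1,g_1(\x),\dots,g_n(\x))^T$ built over $\PP=E^+$ and $\NN=E^-$ are convex compact subsets of $\R^{n+1}$ (compactness follows from continuity of the $g_i$ and closedness of $E$ when $Q$ is a hyperbox; in the discrete case the sets are finite). Hence they fail to intersect if and only if they can be strictly separated by a hyperplane $\{y:\langle A,y\rangle=c\}$. Since every such vector has first coordinate equal to $1$, we have $\langle A,(1,g_1(\x),\dots,g_n(\x))^T\rangle=L(A,\x)$, and after absorbing the threshold $c$ into the constant term $a_0$ the separation reads $L(A,\x)>0$ on $\PP$ and $L(A,\x)<0$ on $\NN$. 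This is precisely the definition of isolability. Combining with Theorem~\ref{thm:main}, I obtain the pivotal equivalence: $L(A^*,\x)$ is a best approximation if and only if the pair $(\PP,\NN)$ is \emph{not} isolable.

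It then remains to prove that the extremal set $E$ is non-isolable if and only if it contains a critical point set. The backward implication is the easy one: isolability is inherited by subsets (the same $A$ isolates any smaller pair), so non-isolability of a subset forces non-isolability of $E$; concretely, if $E$ contains a critical point set $S$, then $S$ is non-isolable, and any $A$ isolating $(\PP,\NN)$ would isolate the positive and negative parts of $S$, a contradiction, whence $E$ is non-isolable. For the forward implication I would argue by minimality: a critical point set is, by definition, exactly a minimal non-isolable extremal subset, so it suffices to show that every non-isolable $E$ contains such a minimal subset.

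The main obstacle is extracting this minimal subset when $E$ is infinite (the continuous case). Here I would first reduce to a finite witness: isolability of $(\PP,\NN)$ is the solvability of a system of strict linear inequalities in the $n+1$ unknowns $A$, so by a Helly- or Carath\'eodory-type argument non-isolability is already caused by a finite subset of $E$ of cardinality at most $n+2$ (the same Carath\'eodory bound used in the proof of Theorem~\ref{thm:main}). Within finite non-isolable subsets one may then delete points one at a time, halting at a subset from which no further point can be removed without restoring isolability; this terminal subset is minimal, hence a critical point set. The delicate points to handle carefully are the passage between strict and non-strict separation (ensuring the separating hyperplane obtained from non-intersecting compact convex hulls is genuinely strict, so that it yields an isolating $A$ rather than merely a weak separator), and verifying that the convex hulls remain closed in the continuous case, which is where compactness of $Q$ and continuity of the monomials $g_i$ are used.
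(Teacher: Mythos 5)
The paper does not actually prove this statement: it is quoted as Rice's theorem from \cite{rice63}, and the paper's only engagement with it is the informal discussion in Section~\ref{seq:relation_with_existing_multivariate}, which sketches why the paper's own conditions are equivalent to Rice's (separating hyperplane $\leftrightarrow$ isolating curve; a minimal set of at most $n+2$ extremal points forming zero in the subdifferential $\leftrightarrow$ a critical point set). Your proposal is a correct, self-contained derivation of Rice's criterion along exactly those lines, and it genuinely fills the gaps the paper leaves open. The chain is sound: since $L(A,\x)=\langle A,(1,g_1(\x),\dots,g_n(\x))^T\rangle$, isolability of $(\PP,\NN)$ is precisely strict separation of the two lifted convex hulls, which for disjoint \emph{compact} convex sets is automatic --- and compactness does hold, since $E^+,E^-$ are closed subsets of the compact hyperbox $Q$ and the $g_i$ are continuous (in the discrete case the sets are finite), so your worry about strict versus weak separation resolves cleanly. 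Combining with the equivalence $\zero\in\partial\Psi(A^*)$ from the proof of Theorem~\ref{thm:main} gives: optimal iff non-isolable. Your Carath\'eodory reduction is also correct and worth spelling out: non-isolability is equivalent to $\zero$ lying in the convex hull of the compact set $\{v_\x:\x\in E^+\}\cup\{-v_\x:\x\in E^-\}$ with $v_\x=(1,g_1(\x),\dots,g_n(\x))^T$ (compactness is what makes the strict system's unsolvability equivalent to membership of $\zero$ in the hull, Gordan-style), so a witness of at most $n+2$ points exists, with both signs necessarily represented since the first coordinate forces weight $\tfrac12$ on each side; greedy deletion within this finite set terminates at a minimal non-isolable subset, which is by definition a critical point set. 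The converse direction via inheritance of isolability by subsets is trivially right. In short: where the paper asserts the equivalence in prose (``if we choose the one with the minimal number of maximal deviation points, then removal of any extremal point makes the subsets isolable''), you have supplied the actual argument, including the two points the paper glosses over --- why separation can be taken strict, and why a finite (hence minimal) non-isolable subset exists in the continuous case.
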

Note that $L(A,\x)$ is linear with respect to $A$ (due to~(\ref{eq:model_function})). Then $\Gamma (A)$ can be interpreted as a linear function (hyperplane). If two convex sets (convex hulls of positive and negative points) are not intersecting, then there is a separating hyperplane, such that these two convex sets lie on opposite sides of this hyperplane.

Note that in our necessary and sufficient optimality conditions we only consider finite subsets of $\PP$ and $\NN$, namely, we only consider the set of at most $n+2$ points from the corresponding sundifferential that are used  to form zero on their convex hull. Generally, there are several ways to form zero, but if we choose the one with the minimal number of maximal deviation points, then, indeed, the removal of any of the extremal points will lead to a situation where zero can not be formed anymore and the corresponding subsets of positive and negative points are isolable (their convex hulls do not intersect).

Therefore, our necessary and sufficient optimality conditions are equivalent to Rice's conditions. The main advantages of our formulations are as follows. First of all, our condition is much simpler, easier to understand and connect with the classical theory of univariate chebyshev approximation. Second, it is much easier to verify our optimality conditions, which is especially important for the construction of of a Remez-like algorithm, where necessary and sufficient optimality conditions need to be verified at each iteration.

 \section{Conclusions and further research directions}\label{sec:conclusions}
In this paper we obtained necessary and sufficient optimality conditions for best polynomial Chebyshev approximation (characterisation theorem). The main obstacle was to modify the notion of alternance to the case of multivatriate polynomials. This has been done using nonsmooth calculus and quasidifferentiability. We also propose an algorithm for optimality verification.

For the future we are planning to proceed in the following directions.
\begin{enumerate}
\item Find a necessary and sufficient optimality condition that is easy to verify in practice (current, we only have a necessary condition, but not a sufficient one).
\item Extend these results to the case of variable polynomial degrees for each dimension.
\item Develop similar optimality conditions for multivariate trigonometric polynomials and polynomial spline Chebyshev approximations.
\item Develop an approximation algorithm to construct best multivariate approximations (similar to the famous Remez algorithm, developed for univariate polynomials~\cite{remez57} and extended to  polynomial splines~\cite{nurnberger,sukhorukovaalgorithmfixed})
\end{enumerate}

    \bibliographystyle{amsplain}
    \bibliography{julien}

\end{document}